\documentclass[a4paper,10pt]{article}

\usepackage[utf8]{inputenc}
\usepackage[T1]{fontenc}
\usepackage{mathtools,amsthm,amssymb,amsmath}
\usepackage{lmodern}
\usepackage{geometry}
\usepackage{graphicx}
\usepackage{xcolor}
\usepackage{enumerate,enumitem,listings}
\usepackage{complexity}
\usepackage{blkarray}
\usepackage{hyperref}

\usepackage[font=small,labelfont=bf]{caption}
\usepackage[font=small,labelfont=normalfont,labelformat=simple]{subcaption}
\graphicspath{{figs/}}

\newtheorem{theorem}{Theorem}

\newtheorem{observation}{Observation}

\newtheorem{conjecture}{Conjecture}

\newcounter{cclaim}
\theoremstyle{definition}
\newtheorem{claim}[cclaim]{Claim}

\newenvironment{claimproof}[1][\proofname]
{%
  \proof[#1]%
}
{%
  \endproof
}

\author
{
Stéphane Bessy \thanks{LIRMM, CNRS, Université de Montpellier, France, \texttt{Stephane.Bessy@lirmm.fr.} Supported by the French ANR project ANR-24-CE48-4377 (GODASse)}
\and 
Matthijs Muis \thanks{Department of Mathematics, ETH Z\"{u}rich, Switzerland,  \texttt{mamuis@ethz.ch}.}
\and
Jean-Sébastien Sereni \thanks{Centre national de la recherche scientifique, Institut de recherche mathématique avancée,
Strasbourg, France, \texttt{jean-sebastien.sereni@cnrs.fr}}
\and
Raphael Steiner \thanks{Department of Mathematics, ETH Z\"{u}rich, Switzerland,  \texttt{raphaelmario.steiner@math.ethz.ch}.
Supported by the SNSF Ambizione Grant No. 216071}
\and
Sebastian Wiederrecht \thanks{School of Computing, KAIST, Daejeon, South Korea \texttt{wiederrecht@kaist.ac.kr}.
Supported by the Institute for Basic Science (IBS-R029-C1)}
}
\date{\today}

\title{A relaxation of the Bermond-Thomassen conjecture}

\begin{document}
\maketitle

\begin{abstract}
The well-known Bermond-Thomassen conjecture states that every digraph of minimum out-degree at least~$2k-1$
contains~$k$ vertex-disjoint directed cycles. Despite being posed in 1981, this conjecture remains unresolved
for all~$k \ge 4$.

We prove a relaxation of this conjecture: every digraph~$D$ of minimum out-degree at least~$2k-1$ contains~$k$
vertex-disjoint cycles, each of which either is directed or can be made directed by reversing one of its arcs.
This bound is sharp and answers a question raised by Cames van Batenburg during the online workshop
``Entropy Compression and Related Methods'' in~$2021$.
\end{abstract}

\section{Introduction}

The study of degree conditions enforcing certain substructures has a long history in graph theory. For
example, the famous theorems of Tur\'{a}n~\cite{turan} and Dirac~\cite{MR47308} determine the best-possible
average and minimum degree conditions guaranteeing the presence of a complete subgraph or of a Hamiltonian
cycle, respectively, which both turn out to be linear in the number of the vertices of the host graphs. When
looking for sparser structures, such as subdivisions of fixed graphs~\cite{MR1657911,MR1476462,MR1395694}, a
constant average degree is already sufficient. A special case of such sparse structures which has been studied
in depth for undirected graphs are packings of a prescribed number of vertex-disjoint cycles (often with
additional properties), see the survey by Chiba and Yamashita~\cite{chibasurvey} for a somewhat recent account
of the large body of work on this topic. In particular, it has been known for a long time (and it is not
difficult to prove) that in undirected graphs, a minimum degree of~$3k-1$ guarantees the existence of ~$k$
vertex-disjoint cycles, and this bound is best-possible as can be verified by the example of the complete
graph~$K_{3k-1}$, which has minimum degree~$3k-2$ but no~$k$ vertex-disjoint cycles. The classical
Corrádi-Hajnal theorem~\cite{corradihajnal} from~$1963$ shows that this minimum degree condition can however be
improved from~$3k-1$ to~$2k$ when the host graph is assumed to have at least~$3k$ vertices.

In contrast, for packings of vertex-disjoint \emph{directed} cycles in \emph{directed} host graphs,
comparatively little is known. Since high average or minimum degree of the underlying graph of a digraph does
not by itself force even the existence of a single directed cycle (consider a transitive orientation of a
complete graph), here it is more natural to impose conditions on the minimum \emph{out-}degree~$\delta^+(D)$
of the host digraph~$D$. Then the natural analogous problem arises: what is the best-possible minimum
out-degree condition enforcing the existence of $k$ vertex-disjoint directed cycles? In a famous conjecture
from~$1981$, Bermond and Thomassen~\cite{bermondthomassen} conjectured the precise value to be~$2k-1$. 
\begin{conjecture}[Bermond-Thomassen conjecture~\cite{bermondthomassen}]
Let~$k$ be a positive integer. Every digraph~$D$ with $\delta^+(D)\ge 2k-1$ contains~$k$ vertex-disjoint directed cycles. 
\end{conjecture}

This conjecture, if true, would indeed be best-possible, since the complete digraph on~$2k-1$ vertices (i.e.,
with all possible ordered pairs of distinct vertices as arcs) has minimum out-degree~$2k-2$ but is too small
to host~$k$ vertex-disjoint directed cycles (each of which must use at least two vertices). Despite
its innocent appearance and the fact that it has received significant attention, the Bermond-Thomassen conjecture has turned out to be
tremendously difficult over the decades, with even partial progress proving to be challenging. In fact, it is
not obvious at all that there would even exist \emph{any} function~$b:\mathbb{N}\rightarrow \mathbb{N}$ such
that every digraph of minimum out-degree at least~$b(k)$ contains~$k$ vertex-disjoint directed cycles. That
this is indeed true was first proved by Thomassen~\cite{ThomassenDisjointCycles} using a beautiful contraction
trick, proving an upper bound on~$b(k)$ growing roughly like~$(k+1)!$. So far, the precise form of the
conjecture has only been proved for~$k=2$ by Thomassen in the same~$1983$ article~\cite{ThomassenDisjointCycles},
and more recently for~$k=3$ by Lichiardopol, P\'{o}r and Sereni~\cite{k3}. It remains wide open for
each~$k\ge 4$, and only for restricted classes, such as tournaments~\cite{bangjensen}, has the conjecture
been proved in full generality. 

Interestingly, Thomassen~\cite{ThomassenDisjointCycles} also proved that for every fixed value of~$k$,
deciding the Bermond-Thomassen conjecture boils down to verifying it for all digraphs on at most a bounded
number of vertices (the bound growing with~$k$). Nevertheless, a brute-force approach seems to be practically
infeasible already for~$k=3$, even with access to modern computing power.

As for asymptotic upper bounds on~$b(k)$, significant progress has been made. Thomassen's fast-growing
factorial bound was later significantly improved to a linear bound of~$b(k)\le 64 k$ by Alon~\cite{alon}, who
introduced several clever probabilistic ideas on top of Thomassen's contraction trick. Alon's proof approach
was then quantitatively refined and optimized by Buci\'{c}~\cite{bucic}, who proved the current
state-of-the-art upper bound~$b(k)\le 18k$. It seems unlikely that further optimization of the methods used by
Alon and Buci\'{c}, without substantial new ideas, could lead to a proof of the precise version of the
Bermond-Thomassen conjecture.

In this note, we shall prove a novel relaxation of the Bermond-Thomassen conjecture, which answers
a question raised by Cames van Batenburg at the~$2021$ online workshop
\href{https://sparse-graphs.mimuw.edu.pl/doku.php?id=sessions:2021sessions:2021session2}{``Entropy Compression and Related Methods''}.
Instead of relaxing the minimum out-degree condition as in the aforementioned
results, we only require the same bound~$2k-1$ as in the Bermond-Thomassen conjecture, but instead slightly
relax the requirement that the found cycles be directed: call a cycle in a digraph~$D$ \emph{near-directed} if
either it is directed or it can be made directed by reversing one of its arcs. See Figure~\ref{fig:neardirected}
for an illustration. We then prove the following result.
\begin{theorem}\label{thm:main1}
Let~$k$ be a positive integer. Every digraph~$D$ with~$\delta^+(D) \ge 2k-1$ contains~$k$ vertex-disjoint near-directed cycles.
\end{theorem}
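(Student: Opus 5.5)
We argue by induction on $k$, taking a minimal counterexample $D$. We may assume $D$ is arc-minimal, so every vertex has out-degree exactly $2k-1$; deleting an arc keeps the hypothesis as long as the out-degree stays at least $2k-1$. The case $k=1$ is trivial, since minimum out-degree $1$ forces a directed cycle. For the induction step it suffices to find one near-directed cycle $C$ whose removal leaves a digraph $D' = D - V(C)$ with $\delta^+(D') \ge 2k-3$. By the induction hypothesis, $D'$ then contains $k-1$ disjoint near-directed cycles, and together with $C$ these give $k$. So the goal is a \emph{short} near-directed cycle such that every vertex outside it sends at most two arcs into it.

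The natural candidates are cycles on two or three vertices. A digon is directed. A triangle is always near-directed unless it is transitive, and a transitive triangle $u\to v$, $u\to w$, $v\to w$ becomes directed after reversing $u\to w$. So \emph{every} cycle of length $3$ in the underlying multigraph is near-directed. More generally, a cycle $x_1\dots x_\ell$ is near-directed when all but one of its consecutive arcs point the same way. The plan is as follows.

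First, I would show that $D$ contains a digon or a triangle $T$ such that every vertex outside $T$ has at most two out-neighbours in $T$. If a digon $\{u,v\}$ exists and no outside vertex sends arcs to both $u$ and $v$, we are done immediately. Otherwise, take a vertex $x$ with $x\to u$ and $x\to v$; then $xuv$ (with the arcs $x\to u$, $x\to v$, and the digon arcs) contains triangles.

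The issue is to choose $T$ so that no outside vertex dominates all three of its vertices. I would pick $T$ extremal: for instance, fix a vertex $v$ and consider out-neighbourhoods. For any vertex $z$, the set $N^+(z)$ has size $2k-1\ge 1$. Take $v$ and $w\in N^+(v)$, and use $N^+(w)$; more cleverly, choose a triangle that minimises the number of vertices dominating all three of its vertices. If a vertex $x$ dominates all of $T=\{a,b,c\}$, then $x$ together with two of $T$'s vertices forms another triangle $T'$, to which we pass. Since $x\to a$ and $x\to b$, the triangle $T'=\{x,a,b\}$ is one where $x$ is a source. An alternative extremal choice is to take $T$ containing a vertex of maximum in-degree, or a sink-like ordering: pick an arc $a\to b$ with $b$ last in some ordering and extend.

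I expect the main obstacle to be exactly this step: guaranteeing a short near-directed cycle whose vertex set receives at most two arcs from every outside vertex. Cycles of length four or more that are near-directed also qualify, which gives flexibility. If triangles fail, I would try a \emph{longest path / last vertex} argument. Take a longest directed path $P = p_1\dots p_m$. All out-neighbours of $p_m$ lie on $P$. If $p_i$ is its earliest out-neighbour, then $p_m$ has $2k-1$ out-neighbours among $p_i,\dots,p_{m-1}$. I would then pick a cycle formed by $p_m$ and a segment of $P$, closed using one backward arc, which is directed. Alternatively, the cycle could use two arcs out of $p_m$, which makes it near-directed since exactly one arc is reversed: $p_j\dots p_l$ plus $p_m\to p_j$ and $p_m\to p_l$. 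Choosing $j,l$ as consecutive out-neighbours of $p_m$ keeps the cycle small.

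Finally, I would bound the number of arcs from an outside vertex into such a cycle, possibly after a further reduction: if some outside vertex $x$ has three or more out-neighbours on $C$, use those arcs to shortcut $C$ into a smaller near-directed cycle and iterate. Termination follows because the cycle strictly shrinks, ending at a digon or a triangle where the counting closes the induction.
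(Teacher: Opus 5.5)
Your reduction rests entirely on one lemma: a $(2k-1)$-out-regular digraph contains a near-directed cycle $C$ with $V(C)\neq V(D)$ such that every vertex outside $C$ has at most two out-neighbours in $V(C)$. You never prove it. Your shortcutting step (essentially the paper's Claim~4) only brings you down to a triangle. At a triangle nothing prevents an outside vertex from dominating all three vertices, and passing from $T$ to $\{x,a,b\}$ comes with no decreasing parameter. (Your digon case needs no discussion: a digon has two vertices, so it is automatically good.)

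Worse, the lemma is false for out-regular digraphs in general, so it cannot follow from out-regularity plus local exchanges. Take the Paley tournament on $\mathbb{Z}_{19}$, with $i\to j$ iff $j-i\in Q=\{1,4,5,6,7,9,11,16,17\}$, the nonzero squares. Since $-1\notin Q$, this is a tournament, and all in- and out-degrees equal $9=2\cdot 5-1$.

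\begin{itemize}
\item Every three vertices have a common in-neighbour. Using the automorphisms $z\mapsto az+b$ with $a\in Q$, you may assume the triple is $\{0,1,c\}$. Then $x=-y$ dominates it for a suitable $y\in\{4,5,6,16\}$: these are exactly the $y\in Q$ with $y+1\in Q$, and the sets $Q-y$ together cover every $c\notin\{0,1\}$.
\item Every triple of a tournament spans a near-directed cycle, so every triangle receives three arcs from some outside vertex.
\item If $C$ is near-directed with $5\le \ell=|V(C)|\le 18$, the number of arcs entering $V(C)$ from outside is $9\ell-\binom{\ell}{2}>2(19-\ell)$. So again some outside vertex has three out-neighbours on $C$.
\item For $\ell=4$, suppose no outside vertex had three out-neighbours on $C$. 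Then the common in-neighbour of each triple of $V(C)$ would have to be the fourth vertex of $V(C)$. Two such triples give both $c\to c'$ and $c'\to c$, which is impossible.
\end{itemize}

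Hence, for $k=5$, no near-directed cycle $C$ in this digraph satisfies $\delta^+(D-V(C))\ge 2k-3$, even though it plainly contains five disjoint triangles. Restricting to a minimal counterexample does not rescue the plan, because the only property of it you use is out-regularity.

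So the greedy strategy of removing one short cycle while costing each remaining vertex at most two out-neighbours cannot work. The contradiction has to come from using the extremality of a counterexample globally. The paper does this as follows.
\begin{itemize}
\item It takes $D$ with fewest vertices and then \emph{most} arcs, the opposite of your arc-minimality. Adding a missing arc then yields $k-1$ disjoint near-directed cycles leaving at least three vertices uncovered.
\item It chooses such a family covering the fewest vertices and, subject to that, maximizing the longest directed path in the remainder.
\item It shows the remainder is an induced directed path $x_1\dots x_t$.
\item It uses weighted averaging over the out-neighbourhoods of $x_0,x_1,x_2,x_t$ to find two triangles $C_i,C_{k-1}$ which, together with the path, contain three disjoint near-directed cycles.
\end{itemize}
Nothing playing the role of this global exchange argument appears in your proposal.
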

\begin{figure}
    \centering
    \includegraphics[width=0.7\linewidth]{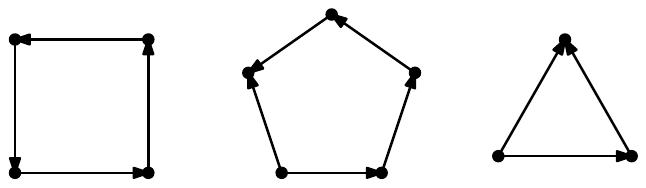}
    \caption{Three vertex-disjoint near-directed cycles.}
    \label{fig:neardirected}
\end{figure}

\noindent Note that while being qualitatively weaker than the Bermond-Thomassen conjecture, the statement of
Theorem~\ref{thm:main1} is still \emph{quantitatively} optimal: the complete digraph on~$2k-1$ vertices has
minimum out-degree~$2k-2$ and no collection of~$k$ vertex-disjoint near-directed cycles, since every such cycle uses at least two vertices.

\paragraph*{Notation and Terminology.} For a digraph~$D$, we denote by~$V(D)$ its vertex set and
by~$A(D)\subseteq \{(u,v)\in V(D)\times V(D)\mid u\neq v\}$ its arc set. For an arc~$(u,v)$, we think of~$u$ as
its starting point (tail) and~$v$ as its endpoint (head). We also set~$v(D)=|V(D)|$ and~$a(D)=|A(D)|$.
For a vertex~$v\in V(D)$, we let~$N_D^+(v)$ and~$N_D^-(v)$ be
its out- and in-neighborhood, respectively, and let $d_D^+(v)$, $d^-_D(v)$ denote the out- and in-degree, respectively; we may drop the subscript~$D$ if clear from context.
If~$u$ is an out-neighbor of~$v$ (in~$D$), we sometimes write that \emph{$v$ sees~$u$} (in~$D$).
For a subset of vertices~$X$ of~$D$, we let~$D[X]$ be the subdigraph of~$D$ induced by the vertices in~$X$,
that is the digraph~$(X,A(D)\cap(X\times X))$. A subdigraph~$D'$ is \emph{induced} (in~$D$) if~$D' = D[V(D')]$.
Similarly, if~$Y$ is a set of vertices or arcs of~$D$, then we denote by~$D-Y$ the digraph obtained from~$D$ by
deleting the elements in~$Y$ (along with the incident arcs if the element is a vertex).
We treat directed paths and cycles in the usual sense and as subdigraphs of the
relevant host digraphs. Directed paths are allowed to be of length~$0$ (a single vertex), and directed cycles
are allowed to be of length~$2$, in which case they consist of two vertices connected by antiparallel arcs.
However, near-directed cycles that are not directed cycles are necessarily of length at least~$3$. If~$x$ and~$y$ are two vertices of a directed path~$P$,
then the \emph{directed path between~$x$ and~$y$ along~$P$} is the directed path with extremal vertices~$x$ and~$y$ contained in~$P$.
If~$Q$ is a directed path or a cycle in a digraph, we let~$|Q|$ be its length (number of arcs).
We let~$\delta^+(D)$ be the minimum out-degree of~$D$, i.e., the number of out-neighbors, minimized over all
vertices of the digraph. We end this paragraph with a straightforward observation, which will be useful to
invoke later.

\begin{observation}\label{obs}
Let~$P$ be a directed path in a digraph~$D$.
\begin{enumerate}
  \item\label{obs-indpath} If~$P$ is not induced, then the subdigraph of~$D$ induced by~$V(P)$ contains a near-directed cycle.
  \item\label{obs-pathplusvertex} If~$v\in V(D)\setminus V(P)$ such that~$|N_D^+(v)\cap V(P)|\ge 2$ or~$|N_D^-(v)\cap V(P)|\ge 2$,
    then the subdigraph induced by~$V(P)\cup\{v\}$ contains a near-directed cycle.
\end{enumerate}
\end{observation}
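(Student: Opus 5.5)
Both parts follow by writing down an explicit cycle whose arcs all lie inside the relevant induced subdigraph, and then checking that it is near-directed. Throughout, write~$P=v_0v_1\cdots v_m$, so that~$(v_i,v_{i+1})\in A(P)$ for~$0\le i<m$. In each case the cycle consists of a subpath of~$P$ together with one or two extra arcs. For every extra arc we identify, at most one arc will be oriented "against" the direction of traversal, so reversing that arc produces a directed cycle.

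For part~\ref{obs-indpath}, suppose~$P$ is not induced. Then~$D[V(P)]$ contains an arc~$a$ joining~$v_i$ and~$v_j$ with~$i<j$ that is not an arc of~$P$. I would distinguish two cases according to the orientation of~$a$.
\begin{enumerate}
  \item If~$a=(v_j,v_i)$, then the directed path between~$v_i$ and~$v_j$ along~$P$, followed by~$a$, is a directed cycle. This includes the case~$j=i+1$, where the cycle is a directed~$2$-cycle.
  \item If~$a=(v_i,v_j)$, then~$a\notin A(P)$ forces~$j\ge i+2$. The subpath~$v_i\cdots v_j$ together with~$a$ is then a cycle of length at least~$3$. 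Reversing~$a$ into~$(v_j,v_i)$ turns it into a directed cycle, so it is near-directed.
\end{enumerate}
In both cases all vertices lie in~$V(P)$, so the cycle lies in~$D[V(P)]$.

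For part~\ref{obs-pathplusvertex}, suppose first that~$v$ has two out-neighbours~$v_i,v_j$ on~$P$ with~$i<j$. Consider the cycle formed by the arc~$(v,v_i)$, the subpath~$v_i\cdots v_j$ of~$P$, and the arc~$(v,v_j)$. Since~$j\ge i+1$, this cycle has length at least~$3$. Its vertices are distinct because~$v\notin V(P)$. Reversing~$(v,v_j)$ into~$(v_j,v)$ yields the directed cycle~$v\,v_i\cdots v_j\,v$, so the cycle is near-directed.

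The case of two in-neighbours~$v_i,v_j$ with~$i<j$ is symmetric. Take the arcs~$(v_i,v)$ and~$(v_j,v)$ together with the subpath~$v_i\cdots v_j$, and reverse~$(v_i,v)$ to obtain the directed cycle~$v\,v_i\cdots v_j\,v$. In both cases the cycle lies in~$D[V(P)\cup\{v\}]$.

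There is no real obstacle in this argument. The only points needing care are the following.
\begin{itemize}
  \item A non-directed near-directed cycle must have length at least~$3$. This is exactly why, in part~\ref{obs-indpath}, an arc~$(v_i,v_{i+1})$ cannot occur as an extra arc, and why~$v\notin V(P)$ is needed in part~\ref{obs-pathplusvertex}.
  \item In the first case of part~\ref{obs-indpath}, the arc~$(v_{i+1},v_i)$ gives a genuine directed~$2$-cycle, which the conventions explicitly allow.
\end{itemize}
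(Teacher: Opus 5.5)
Your proof is correct and follows the paper's argument: in part 1 the extra arc together with the subpath of~$P$ between its endpoints is the near-directed cycle (the paper handles your two orientation cases in one sentence), and in part 2 the two arcs at~$v$ plus the subpath between the two neighbours give the cycle. The only difference is that the paper disposes of the in-neighbour case by reversing all arcs, whereas you write it out explicitly.
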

\begin{claimproof}
  Let~$v_1,\dotsc,v_\ell$ be the vertices of~$P$ ordered such that~$(v_i,v_{i+1})\in A(D)$ for~$i\in\{1,\dotsc,\ell-1\}$.
  If~$P$ is not induced in~$D$, then there exist~$i,j\in\{1,\dotsc,\ell\}$ such that~$a\coloneqq (v_i,v_j)\in A(D)$ and~$j\ge i+2$ or $j\le i-1$. 
  Therefore, the arc~$a$ along with the directed path between~$v_i$ and~$v_j$ along~$P$
  form a near-directed cycle in~$D$.  This proves~\ref{obs-indpath}.

  As for~\ref{obs-pathplusvertex}, up to reversing all arcs we can assume
  that~$|N_D^+(v)\cap V(P)|\ge 2$. Then there exist distinct indices~$i,j\in\{1,\dotsc,\ell\}$
  such that~$\{v_i,v_j\}\subseteq N_D^+(v)$. Therefore, the directed path between~$v_i$ and~$v_j$ along~$P$
  together with the arcs~$(v,v_i)$ and~$(v,v_j)$ form a near-directed cycle in~$D$.
\end{claimproof}

\section{Proof of Theorem\texorpdfstring{~\ref{thm:main1}}{ 1}}\label{sec:neardirected}
We now present the self-contained proof of Theorem~\ref{thm:main1}. The proof method is inspired by the proof of a somewhat similar, but not directly related, cycle packing result for \emph{undirected graphs} due to Enomoto~\cite{undirected}, but needs to address several new challenges coming from working in the realm of directed graphs.
\begin{proof}[Proof of Theorem~\ref{thm:main1}]
Suppose towards a contradiction that the statement is wrong, i.e., there exists a (smallest) integer~$k \ge 1$
for which not all digraphs of minimum out-degree at least~$2k-1$ contain~$k$ vertex-disjoint near-directed
cycles. Clearly,~$k\ge 2$.

Let~$D$ be a digraph with~$\delta^+(D) \ge 2k-1$ and no collection of~$k$ vertex-disjoint near-directed
cycles, chosen among all such digraphs so as to minimize the number of vertices~$v(D)$, and among all such
digraphs with the same number of vertices, so as to maximize the number of arcs~$a(D)$.

Our goal is to derive a contradiction to our initial assumption. Let us start with some simple
observations.

\begin{claim}\label{c1}
  If~$(x,y) \in A(D)$ for some pair of vertices~$x,y$, then~$(y,x) \notin A(D)$.
\end{claim}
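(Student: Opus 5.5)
We argue by contradiction: assume $x,y$ are distinct vertices with both $(x,y)$ and $(y,x)$ in $A(D)$, so $C=D[\{x,y\}]$ is a directed $2$-cycle and hence a near-directed cycle. The plan is to delete $x$ and $y$, apply the minimality of $k$ to a slightly modified $D-\{x,y\}$, and recover $k-1$ disjoint near-directed cycles in that smaller digraph, which together with $C$ give $k$ cycles in $D$. Since $D$ has none, this contradicts our assumption. (If $k-1$ is too small for the minimality to say anything, the case is trivial: for $k=2$ we would need one near-directed cycle in a digraph of minimum out-degree at least $1$, and such a digraph always contains a directed cycle.)

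The main obstacle is that $D' = D-\{x,y\}$ need not satisfy $\delta^+(D')\ge 2k-3$. A vertex $v$ loses one out-neighbour for each of $x,y$ that it sees, so it can drop from out-degree $2k-1$ to $2k-3$. That loss is acceptable. A vertex that loses \emph{two} out-neighbours is at worst at $2k-3$, which is exactly the bound required for $k-1$. In fact no vertex loses more than two, since only two vertices are deleted. Hence $\delta^+(D')\ge 2k-1-2=2k-3=2(k-1)-1$ holds automatically, and this obstacle disappears.

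We also need $D'$ to be nonempty. If $D'$ were empty, then $D$ would have only two vertices, contradicting $\delta^+(D)\ge 2k-1\ge 3$. By the minimality of $k$ (the digraph $D'$ is not a counterexample for $k-1$), $D'$ contains $k-1$ vertex-disjoint near-directed cycles. All of them avoid $x$ and $y$, so adding $C$ yields $k$ vertex-disjoint near-directed cycles in $D$, which is the desired contradiction.

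Since this argument uses only the minimality of $k$, and neither the minimality of $v(D)$ nor the maximality of $a(D)$, the main thing I would double-check is the degree count. The count is immediate: removing two vertices decreases each remaining out-degree by at most two.
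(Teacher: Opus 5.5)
Your proof is correct and is essentially the paper's argument: delete the digon $\{x,y\}$, note that $\delta^+(D-\{x,y\})\ge 2(k-1)-1$ because each remaining vertex loses at most two out-neighbours, and apply the minimality of $k$ to get $k-1$ further disjoint near-directed cycles. Your extra checks (that $D-\{x,y\}$ is non-empty, and the separate treatment of $k=2$) are harmless details the paper leaves implicit; the $k=2$ case is in fact already covered by minimality, since $k\ge 2$ gives $k-1\ge 1$.
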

\begin{claimproof}
Suppose towards a contradiction that~$(x,y), (y,x) \in A(D)$. Let~$C_1$ be the directed cycle in~$D$ induced
by~$x$ and~$y$, and let~$D'\coloneqq D-\{x,y\}$. Then~$\delta^+(D') \ge \delta^+(D)-2 \ge
(2k-1)-2=2(k-1)-1$, and hence, by the assumed minimality of~$k$, we know that~$D'$ contains a
collection~$C_2,\ldots,C_k$ of~$(k-1)$ vertex-disjoint near-directed cycles. This however means
that~$C_1,C_2,\ldots,C_k$ form a collection of~$k$ vertex-disjoint near-directed cycles in~$D$, contradicting
our initial assumptions on~$D$.
\end{claimproof}
\begin{claim}\label{c2}
  The digraph~$D$ is not a tournament.
\end{claim}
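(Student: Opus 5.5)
The plan is to assume, towards a contradiction, that~$D$ is a tournament, and to show directly that it then contains~$k$ vertex-disjoint near-directed cycles. The key point is that in a tournament \emph{every} set of three vertices spans a near-directed cycle. Then a simple counting argument shows there are enough vertices to take~$k$ disjoint triples.

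First, the triangle observation. Let~$\{a,b,c\}$ be any three vertices of the tournament~$D$. By Claim~\ref{c1} there are no antiparallel arcs, so~$D[\{a,b,c\}]$ has exactly one arc between each pair. Hence it is either a directed triangle or a transitive triangle. In the transitive case, label the vertices so that the arcs are~$(a,b)$, $(b,c)$, $(a,c)$. Reversing~$(a,c)$ turns it into the directed triangle~$a\to b\to c\to a$. So in both cases the three vertices span a near-directed cycle.

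Second, counting vertices. Let~$n=v(D)$. In a tournament the out-degrees sum to~$\binom{n}{2}$, while~$\delta^+(D)\ge 2k-1$ gives a sum of at least~$n(2k-1)$. Thus~$(n-1)/2\ge 2k-1$, i.e., $n\ge 4k-1$. Since~$k\ge 1$, this gives~$n\ge 3k$.

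Finally, choose~$3k$ vertices and split them arbitrarily into~$k$ disjoint triples. By the first step, each triple spans a near-directed cycle. This yields~$k$ vertex-disjoint near-directed cycles in~$D$, contradicting the choice of~$D$. Both steps are routine; the only thing to check carefully is that a transitive triangle really is near-directed, which is the one-arc reversal described above.
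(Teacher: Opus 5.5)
Your proof is correct and follows essentially the same route as the paper. You bound the order of the tournament by $2(2k-1)+1=4k-1\ge 3k$, split $3k$ vertices into $k$ triples, and note that each triple spans a directed or a transitive triangle, both of which are near-directed.
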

\begin{claimproof}
  As reported earlier, a stronger statement than that of Theorem~\ref{thm:main1} is true for tournaments~\cite{bangjensen}.
  However we provide a short proof in our case for completeness. Suppose, on the contrary, that~$D$ is a tournament.
  Since~$\delta^+(D) \ge 2k-1$, it follows that the tournament~$D$ contains at least~$2(2k-1)+1$ vertices,
  which is at least~$3k$. Consequently, the underlying undirected graph~$G$ of~$D$ is a complete graph of order at least $3k$, in which one readily obtains~$k$ vertex-disjoint triangles. Each triangle in~$G$ yields either
  a directed triangle or a transitive triangle in~$D$, which are both near-directed cycles.
  Therefore~$D$ contains~$k$ vertex-disjoint near-directed cycles, the desired contradiction.
\end{claimproof}

\begin{claim}\label{c3}
  There exist~$k-1$ vertex-disjoint near-directed cycles~$C_1,\dotsc,C_{k-1}$ in~$D$ such that $| \bigcup_{j=1}^{k-1}{V(C_j)}| \le v(D)-3$.
\end{claim}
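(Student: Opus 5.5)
Since $D$ is a counterexample with minimal $v(D)$ and maximal $a(D)$, and $k\ge 2$, I will produce the $k-1$ cycles by deleting a suitable small set of vertices and invoking the minimality of $k$. The $k-1$ cycles alone never need $2k-2$ vertices in every packing, but to leave three vertices uncovered I want a packing that avoids a prescribed triple.

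The first step is to pick a vertex set $S$ with $|S|=3$ such that $D-S$ still has minimum out-degree at least $2(k-1)-1=2k-3$. For this I need every vertex of $D-S$ to lose at most $2$ out-neighbours when $S$ is removed, i.e.\ no vertex outside $S$ sees all three vertices of $S$. A cleaner route is to choose $S=\{x,y,z\}$ forming a near-directed cycle itself, but that is not needed here.

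The simplest approach avoids degree bookkeeping entirely. Take any near-directed cycle $C$ in $D$ of minimum possible length. Such a cycle exists because $\delta^+(D)\ge 1$ gives a directed cycle. By Claim~\ref{c1}, it has length at least $3$; if its length were larger than $3$ we could not directly control it, so instead I argue via degrees.

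Pick a vertex $v$ and two of its out-neighbours $a,b$, which exist since $2k-1\ge 3$. Set $S=\{v,a,b\}$. A vertex $w\notin S$ loses at most $3$ out-neighbours, which is one too many. So the main obstacle is vertices $w$ that see all of $v,a,b$.

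To handle this, I use Observation~\ref{obs}\ref{obs-pathplusvertex} and an alternative deletion. Choose instead a directed path $P=v_1v_2v_3$; one exists since out-degrees are positive. If $P$ is not induced, Observation~\ref{obs}\ref{obs-indpath} gives a near-directed cycle on $V(P)$. If some $w$ sees two vertices of $P$, then $V(P)\cup\{w\}$ contains a near-directed cycle.

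So in the remaining case every vertex outside $V(P)$ sees at most one vertex of $P$. Hence $D'=D-V(P)$ has $\delta^+(D')\ge 2k-2\ge 2(k-1)-1$. By minimality of $k$, $D'$ contains $k-1$ vertex-disjoint near-directed cycles, and these leave the $3$ vertices of $P$ uncovered, which proves the claim.

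In the other cases, I get a near-directed cycle $C_0$ on at most $4$ vertices with $V(P)\subseteq V(C_0)$. Deleting $V(C_0)$ and applying minimality of $k$ needs $\delta^+(D-V(C_0))\ge 2k-3$, which again requires control on out-degrees into $C_0$. A cleaner fix is to observe that in these cases I don't need leftover vertices from $C_0$.

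Instead, I apply minimality of $v(D)$. Delete a single vertex $u$ of minimum... Honestly, the robust route is the following. $D-V(P)$ has $\delta^+\ge 2k-4$, not enough, so the plan is to choose $P$ maximizing structure: take $P$ a longest directed path among induced ones with no outside vertex seeing two of its vertices. The main obstacle is exactly guaranteeing a $3$-vertex set that every outside vertex sees at most twice. I would first try $S=\{v\}\cup$ two out-neighbours $a,b$ with $(a,b)\notin A(D)$ and $(b,a)\notin A(D)$, chosen using Claim~\ref{c2}, and count that a vertex seeing all of $S$ yields a near-directed cycle on four vertices. I would then recurse by deleting that $4$-set and using $2k-1-4\ge 2(k-2)-1$, i.e.\ minimality of $k$ applied with $k-2$, plus this cycle, to get $k-1$ cycles avoiding the remaining structure.
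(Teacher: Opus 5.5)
Your argument settles only one case. When there is an induced directed path $P=v_1v_2v_3$ such that no vertex outside $V(P)$ has two out-neighbours on it, deleting $V(P)$ costs every vertex at most one out-neighbour. Minimality of $k$ then gives $k-1$ disjoint near-directed cycles avoiding three vertices, and that step is correct. But the complementary situation is the generic one: for every choice of $P$, either $P$ is not induced or some outside vertex sees two of its vertices. You do not handle it. Exhibiting a near-directed cycle $C_0$ there does not help because, as you note, deleting $V(C_0)$ may cost a vertex three or four out-neighbours, and then the induction on $k$ no longer applies.

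Your closing sketch does not repair this. Claim~\ref{c2} only gives a non-adjacent pair somewhere in $D$, not two non-adjacent out-neighbours of a common vertex $v$. Even granting such $v,a,b$ and an outside $w$ seeing all three, deleting $\{w,v,a,b\}$ and applying the case $k-2$ yields $k-2$ cycles plus the triangle on $\{w,v,a\}$. That is $k-1$ cycles, but it guarantees only one uncovered vertex ($b$), not three, since the $k-2$ cycles may cover everything else.

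The missing idea is the \emph{arc}-maximality of $D$, which you state at the outset but never use. By Claims~\ref{c1} and~\ref{c2} there are distinct $x,y$ with neither $(x,y)$ nor $(y,x)$ in $A(D)$. Adding the arc $(x,y)$ gives a digraph $D'$ with $v(D')=v(D)$, $a(D')>a(D)$ and $\delta^+(D')\ge 2k-1$. By the choice of $D$, $D'$ contains $k$ vertex-disjoint near-directed cycles. At most one of them uses the new arc; discard that one, or any one if none does. The remaining $k-1$ cycles lie in $D$. The discarded cycle has at least three vertices because $D'$ still has no antiparallel pair of arcs. This proves the claim with no out-degree bookkeeping at all, which is exactly the obstacle your vertex-deletion approach cannot get around.
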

\begin{claimproof}
By Claim~\ref{c1}, $D$ contains no antiparallel pair of arcs, and by Claim~\ref{c2}, there are distinct vertices~$x,y \in
V(D)$ such that $(x,y), (y,x) \notin A(D)$. Let~$D'$ be defined as the digraph obtained from~$D$ by adding
the arc~$a\coloneqq (x,y)$. Then~$\delta^+(D') \ge \delta^+(D) \ge 2k-1$, $v(D')=v(D)$ and~$a(D')>a(D)$,
so our choice of~$D$ implies that~$D'$ must contain a collection of~$k$ vertex-disjoint near-directed
cycles. Clearly, at most one cycle in the collection can contain the arc~$a$, so we may denote the
collection as~$C_1,\dotsc,C_{k-1},C_k$, where~$a \notin \bigcup_{j=1}^{k-1}A(C_j)$. Then~$C_1,\dotsc,C_{k-1}$
is a collection of $k-1$ vertex-disjoint near-directed cycles in $D$. Furthermore,~$|C_k|\ge 3$
since~$(y,x) \notin A(D')$, and hence~$|\bigcup_{j=1}^{k-1}{V(C_j)}| \le v(D')-|V(C_k)| \le v(D)-3$.
\end{claimproof}

\begin{claim}\label{c4}
Let~$C_1,\dotsc,C_{k-1}$ be vertex-disjoint near-directed cycles in~$D$ that
together cover the fewest vertices, i.e. such that~$|\bigcup_{j=1}^{k-1}{V(C_j)}|$ is
minimized. If~$i \in \{1,\dotsc,k-1\}$ and~$x \notin \bigcup_{j=1}^{k-1}{V(C_j)}$ such
that~$|N^+(x) \cap V(C_i)|\ge 3$, then~$|V(C_i)|=3$.
\end{claim}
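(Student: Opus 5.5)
The plan is to argue by contradiction with the minimality of $|\bigcup_{j=1}^{k-1}V(C_j)|$. Assume $|V(C_i)|=m\ge 4$ and $x$ has three out-neighbours on $C_i$. We will build a near-directed cycle $C_i'$ with $V(C_i')\subseteq (V(C_i)\setminus\{\text{some vertices}\})\cup\{x\}$ and $|V(C_i')|<m$. Since $x\notin\bigcup_j V(C_j)$, replacing $C_i$ by $C_i'$ gives $k-1$ vertex-disjoint near-directed cycles covering strictly fewer vertices, which is the desired contradiction.

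The basic building block is the construction from Observation~\ref{obs}\ref{obs-pathplusvertex}. Suppose $Q$ is a directed path from $a$ to $b$ with $a\ne b$, $x\notin V(Q)$, and $(x,a),(x,b)\in A(D)$. Then $Q$ together with the arcs $(x,a)$ and $(x,b)$ is a near-directed cycle: reversing $(x,b)$ turns it into the directed cycle $x\to a\to\cdots\to b\to x$. This cycle has $|V(Q)|+1\ge 3$ vertices. So it suffices to find, inside $C_i$, a directed path $Q$ between two out-neighbours of $x$ with $|V(Q)|\le m-2$.

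We first describe $C_i$. Either it is a directed cycle, or it is a directed path $v_1v_2\cdots v_m$ together with the arc $(v_1,v_m)$: reversing that arc gives the directed cycle $v_1\cdots v_m v_1$.

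\emph{Case 1: $C_i$ is a directed cycle.} Let $a,b,c$ be three out-neighbours of $x$ in cyclic order. The three directed subpaths $a\to b$, $b\to c$, $c\to a$ along $C_i$ have interior vertex counts summing to $m-3$. Choose the one with fewest interior vertices; it has at most $(m-3)/3+2$ vertices. Adding $x$ gives a near-directed cycle on at most $(m-3)/3+3$ vertices, and this is strictly less than $m$ exactly when $m>3$.

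\emph{Case 2: $C_i$ is the path $v_1\cdots v_m$ plus the chord $(v_1,v_m)$.} Let $v_a,v_b,v_c$ with $a<b<c$ be out-neighbours of $x$. Take the shorter of the subpaths $v_a\cdots v_b$ and $v_b\cdots v_c$ of the directed path. It has at most $(c-a)/2+1\le (m-1)/2+1$ vertices. Adding $x$ gives at most $(m-1)/2+2$ vertices, which is again strictly less than $m$ when $m\ge 4$.

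The only point needing care is checking that the new cycle is genuinely near-directed (it is, by the arc orientation described above) and disjoint from the other $C_j$ (it is, since $x$ lies outside their union). The counting itself is routine, so there is no real obstacle beyond handling the two shapes of $C_i$ separately.
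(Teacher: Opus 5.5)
Your proof is correct and is essentially the paper's argument: take two out-neighbours of $x$ that are close along a spanning directed path of $C_i$, and close a near-directed cycle through $x$ on at most $(m+3)/2<m$ vertices, contradicting minimality. The split into directed and non-directed $C_i$ is unnecessary, since a directed cycle also contains a spanning directed path (delete any arc), so your Case~2 already covers Case~1; also, $m\ge 3$ follows directly from $x$ having three out-neighbours in $V(C_i)$, which rules out $m\le 2$.
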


\begin{claimproof}
By Claim~\ref{c1}, we have~$|V(C_i)| \ge 3$.
Suppose towards a contradiction that~$|V(C_i)| \ge 4$. Since~$C_i$ is a near-directed cycle, there exists a
directed path~$Q \subseteq C_i$ such that~$V(Q)=V(C_i)$. Let~$x_1, x_2 \in N^+(x) \cap V(Q)$ be chosen so as
to minimize the distance between~$x_1$ and~$x_2$ along the path~$Q$. Since~$|N^+(x) \cap V(Q)|=|N^+(x) \cap V(C_i)|
\ge 3$, the distance~$d$ between~$x_1$ and~$x_2$ along~$Q$ satisfies $d \le
\frac{1}{2}|Q|=\frac{1}{2}(|V(C_i)|-1)$, and hence the subdigraph~$C$ of~$D$ formed by the union of the path
between~$x_1$ and~$x_2$ along~$Q$ and the arcs~$(x,x_1),(x,x_2)$ satisfies $|C|=d+2 \le
\frac{1}{2}(|V(C_i)|+3)<|V(C_i)|$, where for the
last inequality we used that $|V(C_i)| \ge 4$. Note that~$C$ is a near-directed cycle in~$D$.
Since~$x \notin \bigcup_{j=1}^{k-1}{V(C_j)}$, this means that~$C_1,\dotsc,C_{i-1},C,C_{i+1},\dotsc,C_{k-1}$ is
a collection of~$k-1$ vertex-disjoint near-directed cycles in~$D$ which covers fewer vertices
than~$C_1,\ldots,C_{k-1}$, a contradiction.
\end{claimproof}

For the rest of the proof, let us fix a collection~$\mathcal{C}=\{C_1,\dotsc,C_{k-1}\}$ of vertex-disjoint near-directed
cycles in~$D$ that
\begin{enumerate}
  \item\label{colloc1} covers the fewest vertices; and
\item\label{colloc2} subject to~\ref{colloc1}, maximizes
the length of a longest directed path contained in the digraph~$D-\bigcup_{j=1}^{k-1}{V(C_j)}$
(which contains no directed cycle by our assumption on~$D$).
\end{enumerate}
Claim~\ref{c3} implies that~$| \bigcup_{j=1}^{k-1}{V(C_j)}| \le v(D)-3$.
This extremal choice of the collection~$C_1,\dotsc,C_{k-1}$ guarantees several further properties, stated in
the following claims. 
We set~$X\coloneqq V(D)\setminus \bigcup_{j=1}^{k-1}{V(C_j)}$.

\begin{claim}\label{c5}
  The induced subdigraph~$D[X]$ is a directed path.
\end{claim}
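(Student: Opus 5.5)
The plan is to first record what $D[X]$ cannot contain, and then use the extremal choice of $\mathcal{C}$, together with Claim~\ref{c4}, to rule out everything except a single directed path.

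\textbf{Structure of $D[X]$.} By the choice of $D$, the digraph $D[X]$ contains no near-directed cycle, since otherwise together with $C_1,\dotsc,C_{k-1}$ we would get $k$ of them. In particular $D[X]$ is acyclic. Let $P=v_1\dotsm v_\ell$ be a longest directed path in $D[X]$. Observation~\ref{obs}\ref{obs-indpath} shows that $P$ is induced. Observation~\ref{obs}\ref{obs-pathplusvertex} shows that every $u\in X\setminus V(P)$ has at most one out-neighbour and at most one in-neighbour on $P$. So it suffices to show $X=V(P)$, and I argue by contradiction, assuming some $u\in X\setminus V(P)$ exists.

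\textbf{The last vertex of $P$.} Consider $v_\ell$.
\begin{itemize}
\item It has no out-neighbour in $X\setminus V(P)$, by maximality of $P$.
\item It has no out-neighbour on $P$: an arc back to some $v_i$ with $i<\ell$ would close a directed cycle, and $D[X]$ is acyclic.
\end{itemize}
Hence all of its at least $2k-1$ out-neighbours lie in $V(C_1)\cup\dotsb\cup V(C_{k-1})$. By pigeonhole, some $C_i$ receives at least $3$ of them. Claim~\ref{c4} then gives $|V(C_i)|=3$, and $v_\ell$ sees all of $V(C_i)=\{a,b,c\}$, where $a\to b\to c$ and $a,c$ are joined by an arc.

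\textbf{The swap.} For every $w\in\{a,b,c\}$, the vertex $v_\ell$ together with $\{a,b,c\}\setminus\{w\}$ spans a near-directed triangle: the two arcs out of $v_\ell$ plus the arc between the other two vertices. Replacing $C_i$ by this triangle gives a new collection $\mathcal{C}_w$ covering the same number of vertices, so criterion~\ref{colloc1} is preserved. The new leftover set is $X_w=(X\setminus\{v_\ell\})\cup\{w\}$. By criterion~\ref{colloc2}, every directed path in $D[X_w]$ has length at most $\ell-1$. The goal is to produce, for a suitable choice of $w$, a directed path in $D[X_w]$ with $\ell$ vertices or more. This yields the contradiction.

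The natural candidates are $v_1\dotsm v_{\ell-1}w$, or a path through $u$ that is spliced with $w$. Here I would exploit the following.
\begin{itemize}
\item The vertex $w$ also has out-degree at least $2k-1$. Applying the same pigeonhole reasoning to the sink of $D[X_w]$ constrains where its out-neighbours lie.
\item The vertex $u$ is not on $P$. Taking $u$ to be a sink or source of $D[X]$ off $P$ and running the same argument on it produces a second triangle fully seen by $u$. With two such triangles, one can try to free two vertices and build two disjoint near-directed cycles from $v_\ell$, $u$ and the triangles. This would contradict criterion~\ref{colloc1}, or give $k$ disjoint cycles.
\end{itemize}

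\textbf{Main obstacle.} The hardest step is this last one: choosing which vertex $w$ to free so that the new leftover digraph acquires a strictly longer path, or otherwise finding a double swap involving $u$. I would first try to show that the pigeonhole sets for $v_\ell$ and for a sink $u\neq v_\ell$ of $D[X]$ cannot both be absorbed by distinct triangles without producing $k$ disjoint near-directed cycles. Failing that, I would do a case analysis on whether $v_{\ell-1}$ or $u$ sees some vertex of $\{a,b,c\}$, to extend $P-v_\ell$ through the freed vertex.
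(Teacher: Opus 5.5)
\textbf{There is a genuine gap: your proposal stops before the decisive step, and the swap you set up cannot give the contradiction.} Your setup matches the paper's: $D[X]$ is acyclic, $P$ is induced, off-path vertices have at most one out-neighbour on $P$, and all $\ge 2k-1$ out-neighbours of $v_\ell$ lie on the cycles.

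\textbf{Why your swap fails.} You absorb $v_\ell$ into the new triangle, which removes the end of $P$ from the leftover set. The best candidate is then $v_1\dotsm v_{\ell-1}w$. It has $\ell$ vertices, i.e.\ length $\ell-1$, which is exactly what criterion~\ref{colloc2} already allows. Your target ``$\ell$ vertices or more'' is off by one: you need $\ell+1$ vertices.

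\textbf{Why the fallback via $u$ fails.} A sink $u$ of $D[X\setminus V(P)]$ may have one out-neighbour on $P$. So only $2k-2$ of its out-neighbours are guaranteed on the $k-1$ cycles, and this does not force three of them into one cycle. A source gives no out-degree information at all. Even two fully-seen triangles would not help if they are different cycles.

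\textbf{The missing idea.} Absorb the off-path vertex rather than $v_\ell$, so that $P$ stays intact and gets extended. Choose the triangle by counting both vertices at once.
\begin{enumerate}
\item Let $y$ be a sink of $D[X\setminus V(P)]$. By Observation~\ref{obs}\ref{obs-pathplusvertex}, $\sum_j |N^+(y)\cap V(C_j)|\ge 2k-2$. Also $\sum_j |N^+(v_\ell)\cap V(C_j)|\ge 2k-1$.
\item The combined total is at least $4k-3>4(k-1)$, so some $C_i$ receives at least $5$ arcs from $\{v_\ell,y\}$.
\item Then one of the two vertices sees three vertices of $C_i$, so $|V(C_i)|=3$ by Claim~\ref{c4}. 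Each of $v_\ell,y$ sees at least two of them, so you can write $V(C_i)=\{u,y_1,y_2\}$ with $y\to y_1$, $y\to y_2$ and $v_\ell\to u$.
\item Replace $C_i$ by the transitive triangle on $\{y,y_1,y_2\}$. This keeps the number of covered vertices, and the new leftover set contains all of $P$ together with $u$.
\item So $P$ plus the arc $(v_\ell,u)$ is a directed path of length $|P|+1$ in the new leftover, contradicting criterion~\ref{colloc2}.
\end{enumerate}
Your pigeonhole on $v_\ell$ alone cannot guarantee that $y$ sees two vertices of the particular triangle that $v_\ell$ sees fully. That is why the joint count over $v_\ell$ and $y$ is essential.
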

\begin{claimproof}
  Let~$P$ be a directed path in~$D[X]$ of maximal length. Since~$D[X]$ contains no near-directed cycle,
  we note that~$P$ must be an \emph{induced} subgraph of~$D$, that is,~$D[V(P)]=P$, by
  Observation~\ref{obs}-\ref{obs-indpath}.

We now show that~$D[X]=P$, and hence that~$D[X]$ is a directed path, by proving that~$V(P)=X$.
Suppose, on the contrary, that~$V(P) \neq X$. Let~$Y\coloneqq X \setminus V(P)$, so~$Y \neq \varnothing$.
As noted earlier,~$D[X]$, and hence~$D[Y]$, must be an acyclic digraph (i.e. cannot contain a directed cycle).
It follows that~$D[Y]$ must contain a sink~$y \in Y$. This means that $N^+(y) \subseteq
\bigcup_{j=1}^{k-1}{V(C_j)} \cup V(P)$. We furthermore deduce that $|N^+(y) \cap V(P)| \le 1$
from Observation~\ref{obs}-\ref{obs-pathplusvertex}.

As a consequence,~$\sum_{i=1}^{k-1}{|N^+(y) \cap V(C_i)|} \ge d^+(y)-1 \ge 2k-2$. In addition,
letting~$x \in V(P)$ be the sink of the directed path~$P$,
we must have $N^+(x) \subseteq \bigcup_{j=1}^{k-1}{V(C_j)}$
because~$P$ was chosen as a longest
directed path in~$D[X]$, and~$P$ is induced in~$D$.
Therefore, $\sum_{i=1}^{k-1}{|N^+(x) \cap V(C_i)|}=d^+(x) \ge 2k-1$. Altogether,
we obtain
\[\sum_{i=1}^{k-1}{\left(|N^+(x) \cap V(C_i)|+|N^+(y) \cap V(C_i)|\right)} \ge 4k-3>4(k-1).\]
By averaging, there exists~$i \in \{1,\ldots,k-1\}$ such that $|N^+(x) \cap V(C_i)|+|N^+(y) \cap V(C_i)| \ge
5$. This implies that $|N^+(x) \cap V(C_i)| \ge 3$ or $|N^+(y) \cap V(C_i)| \ge 3$, and hence by
Claim~\ref{c4} we have $|V(C_i)|=3$. Therefore each of~$x$ and~$y$ has at least~$2$
out-neighbors in~$V(C_i)$, and one of them (at least) has~$3$.
It follows that we can write~$V(C_i)=\{u,y_1,y_2\}$ such that~$u\in N^+(x)$
and~$\{y_1,y_2\}\subseteq N^+(y)$.
Consequently,~$\{y,y_1,y_2\}$ induce a near-directed cycle~$C$ in~$D$, specifically a transitive triangle,
and~$\mathcal{C}'=\{C_1,\dotsc,C_{i-1},C,C_{i+1},\dotsc,C_{k-1}\}$
is a collection of~$k-1$ vertex-disjoint near-directed
cycles in~$D$ which covers the same amount of vertices as~$\mathcal{C}$ (since~$|V(C_i)|=|V(C)|=3$).
Moreover, adding the arc~$(x,u)$ to the path~$P$ produces a directed path of length~$|P|+1$
contained in~$V(D)\setminus\bigcup_{C'\in\mathcal{C}'}V(C')$,
which contradicts our initial choice of~$\mathcal{C}$.
\end{claimproof}

In the remainder of the proof, let~$P$ be the directed path such that~$D[X]=P$. Let us further
denote the sequence of vertices along~$P$ by~$x_1,x_2,\dotsc,x_t$, such that~$x_1, x_t$ are the extremal vertices
of~$P$ (here, importantly, we do not fix the direction of~$P$, i.e., we only require that~$P$ is directed
(possibly from~$x_1$ towards~$x_t$ or from~$x_t$ towards~$x_1$), this relaxed assumption on the
sequence~$x_1,\dotsc,x_t$ will yield us more freedom at later stages of the proof in order to reduce away some
symmetric cases.

Claim~\ref{c3} and our choice of the collection~$C_1,\ldots,C_{k-1}$ imply that
$t=|V(P)|=|V(D)\setminus\bigcup_{j=1}^{k-1}{V(C_j)}| \ge 3$. 

\begin{claim}\label{c6} 
There exists~$i \in \{1,\dotsc,k-1\}$ such that $|N^+(x_1) \cap V(C_i)|+|N^+(x_t) \cap V(C_i)| \ge 5$ and~$|V(C_i)|=3$. 
\end{claim}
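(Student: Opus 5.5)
The plan is to mimic the counting in the proof of Claim~\ref{c5}, applied to the two endpoints~$x_1$ and~$x_t$ of the path~$P$. We first bound how many out-neighbours each endpoint can have inside~$X$; we then show that the remaining out-neighbours land in $\bigcup_j V(C_j)$, average over the cycles, and finish with Claim~\ref{c4}.

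\textbf{Step 1: out-neighbours in~$X$.} Since~$D[X]=P$ is an induced directed path, an endpoint~$x\in\{x_1,x_t\}$ has very few out-neighbours in~$X$. If~$x$ is the sink of~$P$, then~$N^+(x)\cap X=\varnothing$. If~$x$ is the source of~$P$, then its only out-neighbour in~$X$ is its successor on~$P$, because~$P$ is induced. In both cases~$|N^+(x)\cap X|\le 1$. Moreover, exactly one of~$x_1,x_t$ is the source, since~$t\ge 3$ gives~$x_1\neq x_t$. Hence
\[|N^+(x_1)\cap X|+|N^+(x_t)\cap X|\le 1.\]

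\textbf{Step 2: counting and averaging.} All other out-neighbours of~$x_1$ and~$x_t$ lie in $\bigcup_{j=1}^{k-1}V(C_j)$. Using~$\delta^+(D)\ge 2k-1$ for each endpoint, we get
\[\sum_{i=1}^{k-1}\left(|N^+(x_1)\cap V(C_i)|+|N^+(x_t)\cap V(C_i)|\right)\ge 2(2k-1)-1=4k-3>4(k-1).\]
By averaging over the~$k-1$ cycles, some index~$i$ satisfies~$|N^+(x_1)\cap V(C_i)|+|N^+(x_t)\cap V(C_i)|\ge 5$.

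\textbf{Step 3: the cycle is a triangle.} With this~$i$, one of the two summands is at least~$3$. The vertices~$x_1$ and~$x_t$ both lie in~$X$, so they are outside $\bigcup_j V(C_j)$. The collection~$\mathcal{C}$ minimises the number of covered vertices, so it satisfies the hypothesis of Claim~\ref{c4}. Claim~\ref{c4} then gives~$|V(C_i)|=3$, as required.

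I do not expect a real obstacle here. The only point needing care is Step~1: one must check that exactly one endpoint is the source and that inducedness excludes any further out-neighbour in~$X$. This is where~$t\ge 3$ (so~$x_1\ne x_t$) is used, together with the fact that~$P$ is a directed path in one of the two directions.
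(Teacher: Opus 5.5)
Your proposal is correct and follows essentially the same route as the paper: you bound the endpoints' out-neighbours inside $X$ by one in total, get $4k-3>4(k-1)$ out-neighbours into the cycles, and average to find some $C_i$ receiving at least $5$. Claim~\ref{c4} then makes $C_i$ a triangle. Your Step~1 simply spells out in more detail the paper's one-line remark that only one endpoint of the induced path has an out-neighbour in $V(P)$.
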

\begin{claimproof}
Consider the vertices~$x_1$ and~$x_t$. Since~$D[X]=P$ and since~$x_1$ and~$x_t$ are the extremal vertices of~$P$, only
one of them has an out-neighbour in~$V(P)$, and hence
\begin{align*}
\sum_{i=1}^{k-1}{\left(|N^+(x_1) \cap V(C_i)|+|N^+(x_t) \cap V(C_i)|\right)} &=|N^+(x_1)\setminus X|+|N^+(x_t)\setminus X|\\
&\ge 2(2k-1)-1 =4k-3\\
&>4(k-1).
\end{align*}
By averaging, this implies that there exists~$i \in \{1,\dotsc,k-1\}$ such that
$|N^+(x_1) \cap V(C_i)|+|N^+(x_t) \cap V(C_i)| \ge 5$. This means that $\max\{|N^+(x_1) \cap
V(C_i)|,|N^+(x_t) \cap V(C_i)|\} \ge 3$, and thus~$|V(C_i)|=3$ by Claim~\ref{c4}.
\end{claimproof}

Up to relabelling~$x_1,\dotsc,x_t$ in reverse order, we may assume that~$|N^+(x_1) \cap V(C_i)|=3$
and~$|N^+(x_t) \cap V(C_i)| \ge 2$. In addition, up to relabelling the cycles~$C_1,\dotsc,C_{k-1}$, we may
assume that~$i=k-1$. For convenience, we use the
notation~$W\coloneqq V(C_{k-1}) \cup V(P)$. We now write~$V(C_{k-1}) = \{u,u',x_0\}$
such that~$\{u,u'\}\subseteq N_D^+(x_t)$.
\begin{claim}\label{c7}
  The digraph~$D[W]$ contains no two vertex-disjoint near-directed cycles.
\end{claim}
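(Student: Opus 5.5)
The plan is a direct exchange argument: two disjoint near-directed cycles inside $D[W]$ would either give $k$ disjoint near-directed cycles in $D$ or contradict the extremal choice of $\mathcal{C}$.

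Suppose, towards a contradiction, that $D[W]$ contains two vertex-disjoint near-directed cycles $C$ and $C'$. Since $W = V(C_{k-1}) \cup V(P)$ is disjoint from $V(C_1)\cup\dots\cup V(C_{k-2})$, the family $C_1,\dotsc,C_{k-2},C,C'$ consists of $k$ pairwise vertex-disjoint near-directed cycles in $D$. This contradicts the choice of $D$, which has no such collection.

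The argument uses only the following facts, all established before the statement:
\begin{itemize}
\item $C_{k-1}$ is vertex-disjoint from $C_1,\dots,C_{k-2}$, since $\mathcal{C}$ is a family of vertex-disjoint cycles.
\item $V(P)=X$ is, by definition, the complement of $\bigcup_{j=1}^{k-1} V(C_j)$.
\item Every near-directed cycle of $D[W]$ is a near-directed cycle of $D$, because near-directedness is a property of the cycle as a subdigraph.
\end{itemize}

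I do not expect any real obstacle here, as the claim is essentially a bookkeeping consequence of the minimal counterexample assumption.
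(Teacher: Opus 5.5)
Your proposal is correct and is exactly the paper's argument: two disjoint near-directed cycles in $D[W]$, together with $C_1,\dotsc,C_{k-2}$ (which avoid $W=V(C_{k-1})\cup X$), would give $k$ vertex-disjoint near-directed cycles in $D$, contradicting the choice of $D$. The mention of the extremal choice of $\mathcal{C}$ in your opening sentence is never used, and your argument does not need it.
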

\begin{claimproof}
If~$D[W]$ contained two vertex-disjoint near-directed cycles, then these two cycles together
with~$C_1,\dotsc,C_{k-2}$ would yield~$k$ vertex-disjoint near-directed cycles in~$D$, a contradiction to
our initial assumptions on~$D$.
\end{claimproof}
\begin{claim}\label{c8}The following hold.
\begin{enumerate}[label=(\roman*)]
  \item\label{c8i} $|N^+(x_0) \cap W| \le 3$;
  \item\label{c8ii} $|N^+(x_1) \cap W| \le 4$ and $|N^+(x_t) \cap W| \le 4$;
  \item\label{c8iii} $ |N^+(x_2) \cap W| \le 1$.
\end{enumerate}
\end{claim}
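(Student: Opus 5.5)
We use throughout that $D[X]=P$ is an induced directed path (Claim~\ref{c5}). Hence every $x_j$ has at most one out-neighbour in $V(P)$, namely its successor on $P$ (and an endpoint that is the sink of $P$ has none). We also use that $D$ has no antiparallel arcs (Claim~\ref{c1}), so $C_{k-1}$ is a directed or transitive triangle on $\{u,u',x_0\}$. The recurring tool is Claim~\ref{c7}: whenever a configuration yields two vertex-disjoint near-directed cycles inside $W$, we have a contradiction. Two ready-made second cycles are available. Since $x_1$ sees $u$ and $u'$, and $u,u'$ are joined by an arc, $\{x_1,u,u'\}$ induces a transitive (or directed) triangle, which is near-directed. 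The same holds for $\{x_t,u,u'\}$.

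Part~\ref{c8ii} is the easy part. The vertex $x_1$ has at most $3$ out-neighbours in $V(C_{k-1})$ and at most one in $V(P)$, so $|N^+(x_1)\cap W|\le 4$. The same count gives the bound for $x_t$.

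For part~\ref{c8i}, suppose $|N^+(x_0)\cap W|\ge 4$. Since $x_0$ has at most $2$ out-neighbours in $\{u,u'\}$, it has at least $2$ out-neighbours on $P$. It cannot see $x_1$, since $x_1$ sees $x_0$ and there are no antiparallel arcs. Choose two out-neighbours $x_i,x_j$ of $x_0$ on $P$, both with index at least $2$. The subpath of $P$ between them together with the arcs $(x_0,x_i),(x_0,x_j)$ is a near-directed cycle, as in Observation~\ref{obs}-\ref{obs-pathplusvertex}, and it avoids $x_1,u,u'$. Together with the triangle $\{x_1,u,u'\}$ this gives two disjoint near-directed cycles in $D[W]$, contradicting Claim~\ref{c7}.

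For part~\ref{c8iii}, suppose $|N^+(x_2)\cap W|\ge 2$. Then $x_2$ has an out-neighbour $w\in V(C_{k-1})$, and it has a second out-neighbour $z$ that is either on $P$ (its successor, $x_1$ or $x_3$) or in $C_{k-1}$. We split into cases according to $z$ and to the direction of $P$.

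\emph{Case $z\in V(C_{k-1})$.} Then $\{x_2,w,z\}$ is a near-directed triangle. We try to build a disjoint second cycle from the remaining triangle vertex $r$, using $x_1$ (which sees $r$) and the path $x_3,\dots,x_t$. Here $x_t$ sees $u$ or $u'$, and the arcs between $r$ and the other vertices can be exploited via Observation~\ref{obs}.

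\emph{Case $z$ is the $P$-successor of $x_2$.} We combine the arc $(x_2,w)$ with a subpath of $P$ ending at a vertex that sees $w$ (namely $x_1$ or $x_t$) to form a cycle through $x_2$. We then complete a disjoint triangle with $x_t$ or $x_1$ and the two remaining vertices of $C_{k-1}$.

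In each subcase we also use the symmetric relabelling freedom already exploited after Claim~\ref{c6}. The main obstacle is this case analysis in~\ref{c8iii}, especially when $w=x_0$ or when $P$ is directed from $x_t$ to $x_1$. My first attempt there would be to form the cycle $x_1\to\cdots$ along $P$ back to $x_2\to w$, closing via an arc from $w$ into the initial part of $P$ or via $x_1\to w$. This would use that $x_1$ sees all three vertices of $C_{k-1}$, and it leaves $x_t$ together with the other two triangle vertices for the second cycle.
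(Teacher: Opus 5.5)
Your parts (i) and (ii) are correct and follow the paper's argument. For (i) this means the cycle from Observation~\ref{obs}-\ref{obs-pathplusvertex} on $x_0$ and the path $x_2,\dots,x_t$, together with the triangle $\{x_1,u,u'\}$.

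There is a genuine gap in (iii). You leave it as an unfinished case analysis, and the strategy you do spell out for the case $z\in V(C_{k-1})$ fails.

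\emph{Why that case fails.} Once the triangle $\{x_2,w,z\}$ is used as the first cycle, $x_1$ has only one neighbour left in $W$, namely $r$, because its only neighbour on the induced path $P$ is $x_2$. So $x_1$ cannot lie on the second cycle, and that cycle would have to live in $D[\{r,x_3,\dots,x_t\}]$. Nothing forces a cycle there. If $r=x_0$, then $x_0$ need not be adjacent to any of $x_3,\dots,x_t$. If $t=3$, the set has only two vertices and contains no cycle, since there are no antiparallel arcs.

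\emph{Why the closing suggestion fails.} Taking the triangle $\{x_1,x_2,w\}$ and leaving $x_t$ with the two other vertices of $C_{k-1}$ works only when $w=x_0$. For $w\in\{u,u'\}$ the leftover triple is $\{x_t,x_0,w'\}$, and $x_t$ need not be adjacent to $x_0$.

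\emph{The missing idea.} Neither the second out-neighbour $z$ nor the direction of $P$ matters. Since $x_2$ has at most one out-neighbour on $P$, it suffices to show that $x_2$ has no out-neighbour $w\in V(C_{k-1})$ at all.
\begin{enumerate}[label=(\alph*)]
  \item If $w=x_0$, then $\{x_0,x_1,x_2\}$ is a transitive triangle with sink $x_0$, whichever way the arc between $x_1$ and $x_2$ points. It is disjoint from the triangle $\{x_t,u,u'\}$ because $t\ge 3$.
  \item If $w\in\{u,u'\}$, then $w$ has the two in-neighbours $x_2$ and $x_t$ on the directed path $x_2,\dots,x_t$. Observation~\ref{obs}-\ref{obs-pathplusvertex} then gives a near-directed cycle in $D[\{w,x_2,\dots,x_t\}]$. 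It is disjoint from the triangle $\{x_1,x_0,w'\}$, where $w'$ is the other vertex of $\{u,u'\}$.
\end{enumerate}
Either way Claim~\ref{c7} is violated.

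Your case ``$z$ is the $P$-successor'' essentially contains this argument, provided you pick the right partner: $x_1$ when $w=x_0$, and $x_t$ when $w\in\{u,u'\}$. Since that argument never uses $z$, it should replace the whole case split. Also, the reversal symmetry used after Claim~\ref{c6} is no longer available here, because $x_1$ and $x_t$ now play different roles. You do not need it, since the cycles above work for either direction of $P$.
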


\begin{claimproof}\mbox{}

\noindent
\ref{c8i}
Suppose, on the contrary, that~$|N^+(x_0) \cap W| \ge 4$.
Since~$C_{k-1}$ is a triangle containing~$x_0$, it follows that~$|N^+(x_0) \cap V(P)| \ge 2$.
Since~$(x_1,x_0)\in A(D)$, Claim~\ref{c1} implies that~$(x_0,x_1)\notin A(D)$,
and hence Observation~\ref{obs}-\ref{obs-pathplusvertex}
yields that the subdigraph of~$D[W]$ induced by~$\{x_0\}\cup(V(P)\setminus\{x_1\})$
contains a near-directed cycle~$C$. Noticing that~$D[\{x_1,u,u'\}]$ is a near-directed cycle in~$D[W]$
that is vertex-disjoint from~$C$ yields a contradiction with Claim~\ref{c7}.

\noindent
\ref{c8ii}
Let~$i \in \{1,t\}$.
We have $|N^+(x_i) \cap V(C_{k-1})| \le |V(C_{k-1})|=3$ and, since~$P$ is
an induced directed path in~$D$ with extremal vertices~$x_1$ and~$x_t$,
we also have~$|N^+(x_i) \cap V(P)| \le 1$, hence the conclusion. 

\noindent
\ref{c8iii}
Since~$P$ is an induced subgraph of~$D$, the vertex~$x_2$ has at most
one out-neighbor in~$V(P)$, and hence it suffices to show that~$N^+(x_2) \cap V(C_{k-1})=\varnothing$.
Towards a contradiction, suppose that there exists~$y \in N^+(x_2) \cap V(C_{k-1})$.
Because~$D[\{x_t,u,u'\}]$ is a near-directed cycle~$C$, Claim~\ref{c7} implies that~$y\neq x_0$,
for otherwise~$D[\{x_0,x_1,x_2\}]$ would be a near-directed in~$D[W]$ disjoint from~$C$.
Consequently, $y\in\{u,u'\}$
Observation~\ref{obs}-\ref{obs-pathplusvertex} then implies that~$D[\{y,x_2,\dotsc,x_t\}]$
contains a near-directed cycle. Since~$D[\{x_1,x_0,y'\}]$, where~$y'$ is the vertex in~$\{u,u'\}\setminus\{y\}$,
is also a near-directed cycle, this contradicts Claim~\ref{c7}.
We conclude that~$|N^+(x_2) \cap W| \le 1$.
\end{claimproof}

At this point we observe that~$k\ge 3$ must hold; indeed, if not, then~$W = V(D)$, which together with
Claim~\ref{c8}~\ref{c8iii} would imply~$d^+(x_2)\le 1$, in contradiction with our initial observation that~$k$
be at least~2.

\begin{claim}\label{c9}
  There exists~$i \in \{1,\ldots,k-2\}$ such that
\[2|N^+(x_0) \cap V(C_i)|+|N^+(x_1) \cap V(C_i)|+2|N^+(x_2) \cap V(C_i)|+|N^+(x_t) \cap V(C_i)| \ge 13.\]
\end{claim}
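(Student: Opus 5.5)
This is a weighted averaging argument over $C_1,\dotsc,C_{k-2}$, in the spirit of Claims~\ref{c5} and~\ref{c6}. The weights are $2,1,2,1$ on $x_0,x_1,x_2,x_t$. We lower-bound the number of out-neighbours of each of these four vertices outside $W$, and use that $V(D)\setminus W=\bigcup_{j=1}^{k-2}V(C_j)$, since $X=V(P)\subseteq W$.

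First, by Claim~\ref{c8}, using $\delta^+(D)\ge 2k-1$:
\begin{itemize}
\item $x_0$ has at least $2k-1-3=2k-4$ out-neighbours in $\bigcup_{j\le k-2}V(C_j)$;
\item each of $x_1$ and $x_t$ has at least $2k-1-4=2k-5$;
\item $x_2$ has at least $2k-1-1=2k-2$.
\end{itemize}
Multiplying by the weights and summing gives
\[\sum_{i=1}^{k-2}\Bigl(2|N^+(x_0)\cap V(C_i)|+|N^+(x_1)\cap V(C_i)|+2|N^+(x_2)\cap V(C_i)|+|N^+(x_t)\cap V(C_i)|\Bigr)\ge 2(2k-4)+2(2k-5)+2(2k-2)=12k-22.\]

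The averaging step needs the total to exceed $12(k-2)=12k-24$. Since $12k-22>12k-24$, some index $i\in\{1,\dotsc,k-2\}$ has weighted sum at least $13$. This uses $k\ge3$, established just before the claim, so that the index set is nonempty.

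The bound is not tight, so no step is a real obstacle. The only point needing care is that Claim~\ref{c8}~\ref{c8ii} is applied to both $x_1$ and $x_t$, and that these bounds, together with those for $x_0$ and $x_2$, correctly count out-neighbours inside $W$. Since $W$ and $\bigcup_{j\le k-2}V(C_j)$ partition $V(D)$, every out-neighbour not in $W$ lies in some $C_i$ with $i\le k-2$.
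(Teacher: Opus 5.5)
Your proposal is correct and is essentially the paper's argument. The paper bounds the weighted number of out-neighbours inside $W$ by $2\cdot 3+4+2\cdot 1+4=16$ using Claim~\ref{c8} and subtracts this from $6(2k-1)$; this is exactly your vertex-by-vertex bound summed, giving $12k-22>12(k-2)$, and the paper then concludes by averaging just as you do.
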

\begin{claimproof}
Claim~\ref{c8} implies that
\[2|N^+(x_0) \cap W|+|N^+(x_1) \cap W|+2|N^+(x_2) \cap W|+|N^+(x_t) \cap W| \le 2\cdot3+4+2\cdot 1+4=16.\]
It follows that
\begin{align*}
&\sum_{i=1}^{k-2}{\left(2|N^+(x_0) \cap V(C_i)|+|N^+(x_1) \cap V(C_i)|+2|N^+(x_2) \cap V(C_i)|+|N^+(x_t) \cap V(C_i)|\right)}\\
\ge& 2(2k-1)+(2k-1)+2(2k-1)+(2k-1)-16=12k-22>12(k-2),
\end{align*}
and the conclusion follows by averaging.
\end{claimproof}
We now fix an index~$i \in \{1,\dotsc,k-2\}$ be fixed such that the inequality from Claim~\ref{c9} holds.
For simplicity, we use the notation~$n_j \coloneqq |N^+(x_j)\cap V(C_i)|$ for~$j\in \{0,1,2,t\}$, so
\begin{equation}\label{eq9}
2n_0+n_1+2n_2+n_t\ge 13.
\end{equation}

\begin{claim}\label{c10}
  It holds that~$|V(C_i)|=3$.
\end{claim}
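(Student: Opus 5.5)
The plan is to reduce everything to Claim~\ref{c4}. That claim applies to \emph{any} collection of~$k-1$ vertex-disjoint near-directed cycles covering the fewest vertices, not just to the fixed collection~$\mathcal{C}$, because its proof only uses minimality of the number of covered vertices.

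First, suppose one of~$n_1,n_2,n_t$ is at least~$3$. The vertices~$x_1,x_2,x_t$ lie in~$X$, so they are outside~$\bigcup_{j=1}^{k-1}V(C_j)$. Claim~\ref{c4}, applied to~$\mathcal{C}$, then gives~$|V(C_i)|=3$ directly.

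Otherwise~$n_1,n_2,n_t\le 2$. Inequality~\eqref{eq9} then yields
\[2n_0\ge 13-n_1-2n_2-n_t\ge 13-2-4-2=5,\]
so~$n_0\ge 3$. The difficulty is that~$x_0$ belongs to~$C_{k-1}$, so Claim~\ref{c4} cannot be applied to~$x_0$ with respect to~$\mathcal{C}$. To get around this, swap~$x_0$ out of the collection, as follows.

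Recall that~$\{u,u'\}\subseteq N^+(x_t)$ and that~$u,u'$ are adjacent, being two vertices of the triangle~$C_{k-1}$. Hence~$C\coloneqq D[\{x_t,u,u'\}]$ contains a near-directed cycle on these three vertices: a transitive triangle with source~$x_t$. Consider
\[\mathcal{C}'=\{C_1,\dotsc,C_{k-2},C\}.\]
It consists of~$k-1$ vertex-disjoint near-directed cycles and covers exactly as many vertices as~$\mathcal{C}$, since~$|V(C)|=|V(C_{k-1})|=3$. So~$\mathcal{C}'$ also covers the fewest vertices. Moreover~$x_0\notin\bigcup_{C'\in\mathcal{C}'}V(C')$, and~$C_i\in\mathcal{C}'$ because~$i\le k-2$. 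Applying Claim~\ref{c4} to~$\mathcal{C}'$, the vertex~$x_0$ and the cycle~$C_i$, with~$|N^+(x_0)\cap V(C_i)|=n_0\ge 3$, gives~$|V(C_i)|=3$.

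The only step needing care is this swap. One has to check that~$C$ is a genuine near-directed cycle and that~$\mathcal{C}'$ is still vertex-minimal. Both follow immediately from the definitions of~$u,u'$ and from~$|C_{k-1}|=3$.
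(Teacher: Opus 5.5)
Your proposal is correct and is essentially the paper's own proof. When one of $n_1,n_2,n_t$ is at least $3$, you apply Claim~\ref{c4} directly; otherwise you derive $n_0\ge 3$ from \eqref{eq9} and apply Claim~\ref{c4} to $x_0$, using the equally small collection obtained by replacing $C_{k-1}$ with the transitive triangle on $\{x_t,u,u'\}$ (you also correctly write $x_t$ where the paper has the typo $u_t$).
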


\begin{claimproof}
  Since none of~$x_1,x_2$ and~$x_t$ belongs to~$\bigcup_{j=1}^{k-1}V(C_j)$,
  Claim~\ref{c4} yields the conclusion if one of~$n_1,n_2,n_t$ is at least~$3$.
    Suppose therefore that~$n_1,n_2,n_t \le 2$. Together with~\eqref{eq9}, this implies that~$n_0\geq 3$.
    As noted earlier,~$D[\{u_t,u,u'\}]$ is a near-directed cycle, which is vertex-disjoint from
    each of~$C_1,\dotsc,C_{k-2}$. Consequently,~$\mathcal{C}'=\{C_1,\dotsc,C_{k-2},C\}$ is a collection of
    vertex-disjoint near-directed cycles in~$D$ covering as many vertices as~$\mathcal{C}$
    (since~$|V(C_{k-1})|=3$), and thus Claim~\ref{c4} applies to~$\mathcal{C'}$ and~$x_0$, yielding that~$|V(C_i)|=3$.
  \end{claimproof}

Claim~\ref{c10} implies that~$n_j \le 3$ for~$j\in \{0,1,2,t\}$.
Together with~\eqref{eq9}, this implies that~$2n_0+2n_2 \ge 7$, and so~$n_0+n_2 \ge 4$. 
For convenience, we set~$U \coloneqq V(C_i)\cup V(C_{k-1})\cup V(P)$.

\begin{claim}\label{c11}
  The digraph $D[U]$ contains no three vertex-disjoint near-directed cycles. As a consequence, none of the following five scenarios can occur.
\end{claim}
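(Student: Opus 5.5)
The first assertion follows exactly as Claim~\ref{c7} does. Suppose $D[U]$ contained three vertex-disjoint near-directed cycles $C'_1,C'_2,C'_3$. By definition, $U=V(C_i)\cup V(C_{k-1})\cup V(P)$, so these three cycles avoid every vertex of the cycles $C_j$ with $j\in\{1,\dotsc,k-2\}\setminus\{i\}$. There are $k-3$ such cycles, and they are pairwise vertex-disjoint. Adding $C'_1,C'_2,C'_3$ to them gives $(k-3)+3=k$ vertex-disjoint near-directed cycles in $D$. This contradicts the choice of $D$ as a counterexample. The argument is meaningful here because we already established $k\ge 3$, so the index $i\in\{1,\dotsc,k-2\}$ exists and $C_i\neq C_{k-1}$.

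The five scenarios are stated only after this point, so I will describe the uniform way I expect to exclude each one, given what the surrounding claims suggest. Each scenario should assert a specific configuration of arcs from $x_0$, $x_1$, $x_2$, $x_t$ into the triangle $V(C_i)=\{a,b,c\}$, with $n_0+n_2\ge 4$. For instance, $x_0$ and $x_2$ might have out-neighbours in $C_i$ that overlap in a prescribed way, or $x_t$ might send two arcs into $C_i$ while $x_2$ sends one. For each scenario I would exhibit three explicit vertex-disjoint near-directed cycles inside $D[U]$. The building blocks are the following:
\begin{itemize}
\item the triangles $\{x_1,u,u'\}$ and $\{x_t,u,u'\}$, which are near-directed because $x_1$ sees all of $V(C_{k-1})$ and $x_t$ sees $u,u'$;
\item the triangle $\{x_0,x_1,x_2\}$, or more generally $\{x_0,x_1,\dots\}$ arising via the arc $(x_1,x_0)$;
\item any vertex $v$ outside a triangle that has two out-neighbours in that triangle; then $v$ with those two neighbours forms a transitive, hence near-directed, triangle;
\item Observation~\ref{obs}-\ref{obs-pathplusvertex} applied to a subpath of $P$ together with one vertex of $C_i$ or $C_{k-1}$ that has two out- or in-neighbours on the subpath.
\end{itemize}

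A typical scenario would be handled like this. If $x_2$ sees two vertices $a,b$ of $C_i$ and $x_0$ sees $c$ together with some other vertex, then $\{x_2,a,b\}$ is one cycle, $\{x_t,u,u'\}$ is a second, and the third is $\{x_0,x_1,c\}$ or a cycle obtained from $x_0$, $c$ and a path segment. Whenever $x_1$ is needed by two of the cycles, I would reroute one of them through $x_t$, or through a segment $x_3,\dots,x_{t-1}$ of $P$. This rerouting uses the relabelling freedom in the direction of $P$.

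The main obstacle is bookkeeping. We must guarantee that the three cycles are genuinely disjoint, and this requires knowing the direction of $P$. Recall that $(x_1,x_2)$ and $(x_2,x_1)$ cannot both be arcs, and that $t\ge 3$. I would split each scenario according to whether $P$ runs from $x_1$ to $x_t$ or the reverse, since that decides whether $\{x_0,x_1,x_2\}$ is directed or only near-directed. I would also need to check that neither $x_1$ nor $x_t$ is claimed twice, and that when $t=3$ the vertex $x_2$ is not also the vertex adjacent to $x_t$. I expect the case $t=3$ to be the most delicate, because there $P$ offers no spare vertices to reroute through.
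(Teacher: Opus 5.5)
Your proof of the first assertion is exactly the paper's: three disjoint near-directed cycles in $D[U]$, together with the $k-3$ cycles of $\mathcal{C}\setminus\{C_i,C_{k-1}\}$, would give $k$ disjoint near-directed cycles in $D$. The gap is the second assertion, which you never establish. You only outline a template, and that template relies on cycles that need not exist. The set $\{x_0,x_1,x_2\}$ spans a cycle only if $x_0$ and $x_2$ are adjacent. The proof of Claim~\ref{c8}\ref{c8iii} shows $N^+(x_2)\cap V(C_{k-1})=\varnothing$, which rules out $(x_2,x_0)$, and nothing supplies $(x_0,x_2)$. In your ``typical scenario'', the triangle $\{x_0,x_1,c\}$ needs an arc between $x_1$ and $c$, which your hypothesis does not give. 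Rerouting through $x_3,\dots,x_{t-1}$ cannot help either, because nothing is known about arcs from those vertices into $V(C_i)\cup V(C_{k-1})$.

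The five scenarios, listed after the claim, all say that two of $x_0,x_1,x_2,x_t$ share an out-neighbour $a\in V(C_i)$, while a third vertex sees the remaining vertices $b,b'$ of $C_i$. The paper rules out each one with three explicit cycles.

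(a) The first is a transitive triangle on $a$ and the two sharing vertices. These two vertices are adjacent via the arc $(x_1,x_0)$, or via the arc of $P$ between $x_1$ and $x_2$ (in either orientation). For the non-adjacent pair $x_2,x_t$, one instead applies Observation~\ref{obs}-\ref{obs-pathplusvertex} to $a$ and the subpath of $P$ on $x_2,\dots,x_t$, on which $a$ has two in-neighbours.

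(b) The second is the transitive triangle on the third vertex and $b,b'$, which are adjacent because $C_i$ is a triangle.

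(c) The third is a triangle on $u,u'$ and whichever of $x_0,x_1,x_t$ is unused: $\{x_t,u,u'\}$, $\{x_1,u,u'\}$, or $C_{k-1}$ itself. That last option is missing from your list of building blocks. It is exactly what is needed in scenarios (iv) and (v), where $x_1$ and $x_t$ are both already taken.

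Every cycle used is either a triangle, which is automatically directed or transitive, or comes from Observation~\ref{obs}. So no case split on the direction of $P$ is needed. The case $t=3$ causes no trouble either, since $x_2\neq x_t$.
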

\begin{enumerate}[label=(\roman*)]
  \item\label{sci} $x_0, x_1$ have a common out-neighbor in~$V(C_i)$, while~$x_2$ sees the other two vertices of~$V(C_i)$.
  \item\label{scii} $x_2,x_t$ have a common out-neighbor in~$V(C_i)$, while~$x_0$ sees the other two vertices of~$V(C_i)$.
  \item\label{sciii} $x_1,x_2$ have a common out-neighbor in~$V(C_i)$, while~$x_0$ sees the other two vertices of~$V(C_i)$.
  \item\label{sciv} $x_2,x_t$ have a common out-neighbor in~$V(C_i)$, while~$x_1$ sees the other two vertices of~$V(C_i)$.
  \item\label{scv} $x_1,x_2$ have a common out-neighbor in~$V(C_i)$, while~$x_t$ sees the other two vertices of~$V(C_i)$.
\end{enumerate}

\begin{claimproof}
  If~$D[U]$ contained three disjoint near-directed cycles, then adding them to~$\mathcal{C}\setminus\{C_i,C_{k-1}\}$
  would yield~$k$ vertex-disjoint near-directed cycles in~$D$, contradicting our initial assumptions.

    For the second part, we show that assuming any of the five scenarios, we can construct three
    vertex-disjoint near-directed cycles in~$D[U]$, obtaining the desired contradiction. In each of the listed
    cases, let~$a$ be a common out-neighbor of the two vertices listed first and let~$b,b'$ be the other
    two vertices of the triangle~$C_i$, so~$V(C_i) = \{a,b,b'\}$. We now go through each of the five
    scenarios and identify the three disjoint near directed cycles in each case. In each of these cases, it is
    straightforward to verify that the given cycles are indeed vertex-disjoint and near-directed.

    \begin{itemize}
      \item[\ref{sci}] The triangles induced by~$\{x_0,x_1,a\}$, $\{x_t,u,u'\}$ and~$\{x_2,b,b'\}$ are three vertex-disjoint near-directed cycles in~$D[U]$.
        \item[\ref{scii}] Observation~\ref{obs}-\ref{obs-pathplusvertex} implies that~$D[\{a,x_2,\dotsc,x_t\}]$ contains a
          near-directed cycle which, together with the triangles induced by~$\{x_1,u,u'\}$ and~$\{x_0, b,b'\}$,
          yield three vertex-disjoint near-directed cycles in~$D[U]$.
        \item[\ref{sciii}] The triangles induced by~$\{x_1,x_2,a\}$, $\{x_t,u,u'\}$ and~$\{x_0,b,b'\}$ are three vertex-disjoint near-directed cycles in~$D[U]$.
        \item[\ref{sciv}] Observation~\ref{obs}-\ref{obs-pathplusvertex} implies that~$D[\{a,x_2,\dotsc,x_t\}]$ contains a
          near-directed cycle which, together with the triangles induced by~$\{x_0,u,u'\}$ and~$\{x_1, b,b'\}$,
          yield three vertex-disjoint near-directed cycles in~$D[U]$.
        \item[\ref{scv}] The triangles induced by~$\{x_1,x_2,a\}$, $\{x_0,u,u'\}$ and~$\{x_t,b,b'\}$ are three vertex-disjoint near-directed cycles in~$D[U]$.
\end{itemize}
\end{claimproof}

The forbidden conditions of Claim~\ref{c11} can be used to derive tighter bounds on~$n_0$ and~$n_2$,
eventually leading our minimal counterexample towards a contradiction.

\begin{claim}\label{c12}
  It holds that $n_0\le 2$.
\end{claim}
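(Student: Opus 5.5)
We argue by contradiction: suppose that~$n_0\ge 3$. By Claim~\ref{c10} we have~$|V(C_i)|=3$, so~$n_0=3$, that is, $x_0$ sees all three vertices of~$V(C_i)$. The idea is that this makes scenarios~\ref{scii} and~\ref{sciii} of Claim~\ref{c11} very easy to trigger. That in turn forces~$N^+(x_2)\cap V(C_i)$ to be disjoint from both~$N^+(x_1)\cap V(C_i)$ and~$N^+(x_t)\cap V(C_i)$. Once this is established, the inequality~\eqref{eq9} cannot hold.

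First I extract the consequences of~\eqref{eq9} when~$n_0=3$, namely
\[n_1+2n_2+n_t\ge 7.\]
Next I rule out common out-neighbours. Suppose some~$a\in V(C_i)$ is an out-neighbour of both~$x_1$ and~$x_2$. Since~$x_0$ sees the other two vertices of~$V(C_i)$, scenario~\ref{sciii} of Claim~\ref{c11} occurs, which is impossible. Likewise, suppose some~$a\in V(C_i)$ is a common out-neighbour of~$x_2$ and~$x_t$. Then scenario~\ref{scii} occurs, again impossible.

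Hence, writing~$N_j\coloneqq N^+(x_j)\cap V(C_i)$, the set~$N_2$ is disjoint from~$N_1$ and from~$N_t$. Both~$N_1$ and~$N_t$ are therefore contained in~$V(C_i)\setminus N_2$, a set of size~$3-n_2$. This gives~$n_1,n_t\le 3-n_2$, and so
\[n_1+2n_2+n_t\le 2(3-n_2)+2n_2=6<7.\]
This contradicts the displayed consequence of~\eqref{eq9}, so~$n_0\le 2$.

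I do not expect a real obstacle here. The only point needing care is checking that the hypotheses of scenarios~\ref{scii} and~\ref{sciii} are met verbatim. Those scenarios require that~$x_0$ sees ``the other two vertices'' of~$V(C_i)$, and this holds automatically because~$x_0$ sees all three. No case analysis on the value of~$n_2$ is needed, since the bound~$2(3-n_2)+2n_2=6$ is uniform in~$n_2$.
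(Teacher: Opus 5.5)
Your proof is correct and is essentially the paper's argument. The paper rewrites~\eqref{eq9} as $(n_1+n_2)+(n_2+n_t)\ge 7$ and observes that one of these pairs of subsets of the three-element set $V(C_i)$ must therefore intersect, which triggers scenario~\ref{sciii} or~\ref{scii}. That is exactly the contrapositive of your disjointness bounds $n_1+n_2\le 3$ and $n_2+n_t\le 3$.
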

\begin{claimproof}
  Suppose, on the contrary, that $n_0=3$. Substituting this into~\eqref{eq9} yields that
    \[
    (n_1+n_2)+(n_2+n_t)\ge 7,
    \]
    which implies that~$n_1+n_2 \ge 4$ or~$n_2+n_t\ge 4$.
    Since two subsets of a three-element set whose sizes sum to at least four intersect, the first case implies that~$x_1$ and~$x_2$ have a common out-neighbor in~$V(C_i)$, while the second case implies that~$x_2$ and~$x_t$
    have a common out-neighbor in~$V(C_i)$. Since~$V(C_i)\subseteq N_D^+(x_0)$ as~$n_0=3$, we conclude that
    scenario~\ref{sciii} or~\ref{scii} of Claim~\ref{c11} holds, respectively, a contradiction.
\end{claimproof}

\begin{claim}\label{c13}
  It holds that $n_2 \le 2$.
\end{claim}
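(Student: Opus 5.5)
Suppose, for a contradiction, that $n_2=3$, so that $x_2$ sees every vertex of $V(C_i)$. The plan is to show that one of the five forbidden scenarios of Claim~\ref{c11} then occurs. The key simplification is that, since $V(C_i)\subseteq N^+(x_2)$, \emph{every} out-neighbour of $x_1$, $x_t$ or $x_0$ in $V(C_i)$ is automatically a common out-neighbour with $x_2$. Hence the conditions of scenarios \ref{sci}--\ref{scv} reduce to simple statements about which vertex sees which.

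First I record the numerical constraints. Substituting $n_2=3$ into~\eqref{eq9} gives $2n_0+n_1+n_t\ge 7$. By Claim~\ref{c12} we have $n_0\le 2$, so $n_1+n_t\ge 3$. Moreover, if $n_1+n_t\le 4$, then $2n_0\ge 3$, so $n_0=2$. I then split into cases according to $(n_1,n_t)$.

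\emph{Case $n_1=3$.} If $n_t\ge1$, take $a\in N^+(x_t)\cap V(C_i)$. Then $a$ is common to $x_2,x_t$ and $x_1$ sees the other two vertices, which is scenario~\ref{sciv}. If $n_t=0$, then $n_0=2$; the vertex of $C_i$ not seen by $x_0$ is seen by $x_1$ and $x_2$, which is scenario~\ref{sciii}.

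\emph{Case $n_t=3$.} This is symmetric. If $n_1\ge1$, choosing $a\in N^+(x_1)\cap V(C_i)$ gives scenario~\ref{scv}. If $n_1=0$, then $n_0=2$ and the vertex missed by $x_0$ is common to $x_2,x_t$, which is scenario~\ref{scii}.

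\emph{Case $n_1,n_t\le 2$.} Here $n_1+n_t\in\{3,4\}$, so $n_0=2$. Write $N^+(x_0)\cap V(C_i)=\{b,b'\}$ and let $a$ be the third vertex of $C_i$. If $a\in N^+(x_1)$, we get scenario~\ref{sciii}; if $a\in N^+(x_t)$, we get scenario~\ref{scii}. Otherwise neither $x_1$ nor $x_t$ sees $a$. Since $n_1+n_t\ge 3$, some vertex has $n_j=2$, so $N^+(x_1)\cap V(C_i)$ or $N^+(x_t)\cap V(C_i)$ equals $\{b,b'\}$, and the other one is nonempty.
\begin{itemize}
\item If $n_1\ge 1$, then $x_1$ has an out-neighbour $b\in\{b,b'\}$, which is common with $x_0$, while $x_2$ sees the other two vertices $a,b'$. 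This is scenario~\ref{sci}.
\item If $n_1=0$, then $n_t\le 2$ contradicts $n_1+n_t\ge3$, so this subcase cannot occur.
\end{itemize}

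In every case a forbidden scenario arises, contradicting Claim~\ref{c11}. The only delicate point is this last subcase, where $x_0$, $x_1$ and $x_t$ all see only $\{b,b'\}$; there one has to use scenario~\ref{sci} rather than the scenarios involving $x_t$. The rest of the argument is routine bookkeeping.
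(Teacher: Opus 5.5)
Your proof is correct and follows essentially the same route as the paper: assume $n_2=3$, then do a case analysis on $n_0,n_1,n_t$ until one of the forbidden scenarios of Claim~\ref{c11} appears. The difference is only in how the cases are organised. The paper splits first on $n_0\in\{1,2\}$, while you split on whether $n_1$ or $n_t$ equals $3$ and deduce $n_0=2$ when needed. Your opening observation that every out-neighbour in $V(C_i)$ is automatically shared with $x_2$ also shortens the paper's most involved subcase, $(n_0,n_1,n_2,n_t)=(2,1,3,2)$.
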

\begin{claimproof}
  Suppose, on the contrary, that $n_2 = 3$. Recalling that~$n_0 + n_2 \ge 4$, Claim~\ref{c12} implies that~$n_0 \in \{1,2\}$.
  We treat these two cases separately.

  If~$n_0=1$, then~\eqref{eq9} implies that~$n_1 + n_t \ge 5$, hence~$n_1\ge 2$.
    \begin{itemize}
        \item If~$n_1 = 2$, then~$n_t \ge 3$, so~$V(C_i)\subseteq N^+(x_2)\cap N^+(x_t)$. This means
          that~$x_2$ and~$x_t$ have a common out-neighbor in~$C_i$ such that~$x_1$ sees the other two vertices
          of~$C_i$, leading to scenario~\ref{sciv} of Claim~\ref{c11}, the desired contradiction.

        \item If~$n_1=3$, then~$x_0$ and~$x_1$ must have a common out-neighbor in~$C_i$, and~$x_2$ sees the other two vertices of~$C_i$, leading to scenario~\ref{sci}
          of Claim~\ref{c11}, the desired contradiction.
    \end{itemize}

    If~$n_0 = 2$, then~\eqref{eq9} implies that~$n_1+n_t \ge 3$ so~$n_1\ge 2$ or~$n_t \ge 2$.
    If~$n_t = 3$,
    since~$n_2 = 3$ and~$n_0 = 2$, we deduce that~$x_2$ and~$x_t$ must share an out-neighbor in~$V(C_i)$ such that~$x_0$ sees the
    other two vertices in~$V(C_i)$, leading to scenario~\ref{scii} of Claim~\ref{c11}, a contradiction. So~$n_t \le 2$.
    \begin{itemize}
        \item  If~$n_1 \ge 2$, then~$x_1$ shares an out-neighbor with~$x_0$ in~$V(C_i)$, and~$x_2$ sees the other two vertices of~$C_i$ as~$n_2=3$,
          leading to scenario~\ref{sci} of Claim~\ref{c11}, a contradiction. 
        
        \item If~$n_1 \le 1$, then since~$n_t\le 2$ we deduce that~$n_t=2$ and~$n_1=1$.
          Thus~$(n_0,n_1,n_2,n_t) = (2,1,3,2)$. Let~$y,y'$ denote the two distinct out-neighbors of~$x_t$
          in~$V(C_i)$. We assert that~$N^+(x_0)\cap V(C_i)=\{y,y'\}$. Indeed, if not, then let~$a$ be the
          unique element of~$\{y,y'\}$ not in~$N^+(x_0)$, and note that~$a$ is a common out-neighbor of~$x_2$ and~$x_t$.
          As~$x_0$ sees the other two vertices of~$C_i$, scenario~\ref{scii} of
          Claim~\ref{c11} occurs, a contradiction.

        Thus~$N^+(x_0)\cap V(C_i)=\{y,y'\}$. Now, if the unique out-neighbor~$x'$ of~$x_1$ in~$V(C_i)$
        lies in~$\{y,y'\}$, then~$x'$ is a out-neighbor of both~$x_0$ and~$x_1$. Since~$x_2$ sees the
        other two vertices of~$C_i$, scenario~\ref{sci} of Claim~\ref{c11} occurs, a contradiction.
        Otherwise,~$x'$ is an out-neighbor of both~$x_1$ and~$x_2$ contained in~$V(C_i)\setminus\{y,y'\}$, and both~$x_0$
        and~$x_t$ see the other two vertices~$y,y'$ of~$C_i$.
        Thus, both scenarios~\ref{sciii} and~\ref{scv} of Claim~\ref{c11} occur, yielding the desired
        contradiction also in this case. 
    \end{itemize}
\end{claimproof}

Claims~\ref{c12} and~\ref{c13} and the earlier observation that~$n_0 +n_2 \ge 4$ together imply that
values~$n_0=n_2=2$. Inequality~\eqref{eq9} then implies that~$n_1+n_t \ge 5$, therefore each of~$n_1$ and~$n_t$ is at least~$2$,
and one of them (at least) is~$3$.

\begin{itemize}
    \item If~$n_1 = 3$, then~$n_t\ge 2$ and~$n_2=2$ imply that~$x_2$ and~$x_t$ share an out-neighbor
      in~$V(C_i)$, and~$x_1$ sees the other two vertices of~$C_i$.
      This is scenario~\ref{sciv} of Claim~\ref{c11}, a contradiction.

    \item If~$n_t  =3$, then~$n_1 \ge 2$ and~$n_2=2$ imply that $x_1$ and~$x_2$ share an out-neighbor in~$V(C_i)$,
      and~$x_t$ sees the other two vertices of~$C_i$. This is scenario~\ref{scv} of Claim~\ref{c11}, a contradiction.
\end{itemize}
We have shown that every out-degree configuration in~$D[U]$ leads to a forbidden scenario listed in
Claim~\ref{c11}. This contradiction concludes the proof of Theorem~\ref{thm:main1}.
\end{proof}

\paragraph*{AI Disclosure.} No AI tools have been used for the writing or the generation of any of the mathematical ideas of this paper.

\bibliographystyle{alpha}
\bibliography{references}

\end{document}